\documentclass[10pt]{article}
\usepackage{graphicx}

\usepackage{amsfonts,amsmath,amssymb,amsthm}

\usepackage[margin = 3.0truecm]{geometry}
\usepackage{tikz}
\usepackage{color}
\title{A duality of mis\`{e}re games and play with a pass}
\author{Koki Suetsugu\footnote{Toyo University, suetsugu.koki@gmail.com}}
\date{}
\newtheorem{theorem}{Theorem}
\newtheorem{definition}{Definition}
\newtheorem{lemma}{Lemma}
\newtheorem{corollary}{Corollary}

\begin{document}
\maketitle
\begin{abstract}
In combinatorial game theory, the choice of play convention is a fundamental aspect of the theory.
The two most widely studied conventions are  normal play, in which the player who makes the last move wins, and  mis\`ere play, in which the player who makes the last move loses. Another well-known variant is play with a single shared pass. In such games, at most one pass may be used in total during the game: once the pass has been used, neither player may pass thereafter. Moreover, once a terminal position has been reached, passing is no longer allowed.
Recall that, for mis\`ere play, one sometimes defines SG values (or Sprague-Grundy values) in the same recursive manner as in normal play but assigns the terminal position the value 1. Also, SG values can be considered for normal-play games with a pass. 
In this work, we propose transformations that allow both mis\`ere play and play with a pass to be treated as normal-play games. As a consequence, we show that there is a certain duality between a generalization of mis\`ere play and a generalization of play with a pass.
\end{abstract}
\section{Introduction}
Combinatorial game theory studies the mathematical structure underlying two-player games with no chance moves and no hidden information. Throughout this paper, we further assume that every position has finitely many options and terminates after finitely many moves (that is, the game is {\em short}) and that the available moves from each position are the same for both players (that is, the game is {\em impartial}).

In combinatorial game theory, the choice of play convention is a fundamental aspect of the theory.
The two most widely studied conventions are  normal play, in which the player who makes the last move wins, and  mis\`ere play, in which the player who makes the last move loses. These winning conventions have been studied since the earliest days of combinatorial game theory~\cite{Bou02}, and a substantial body of research has developed around them. We refer the reader to~\cite{CGT} for further details. To make this paper self-contained, however, we provide below the definitions and terminology required for our discussion.

\subsection{SG values}
First, we introduce SG values. 
\begin{definition}
    If a legal move from $G$ leads to $G'$, then $G'$ is called an option of $G$, and we write $G \rightarrow G'$.
\end{definition}
\begin{definition}
    A position $G$ is a $\mathcal{P}$-position if the previous player (the player who has just moved) has a winning strategy.

    A position $G$ is an $\mathcal{N}$-position if the next player (the current player) has a winning strategy.
  
\end{definition}
\begin{definition}
\label{def:SGvalue}
    Let $\mathbb{N}_0$ be the set of nonnegative integers, and let ${\rm mex}(S) = \min(\mathbb{N}_0 \setminus S).$

    For every position $G$, its {\em SG value} (or Sprague-Grundy value) $\mathcal{G}(G)$ is 
    $$
    \mathcal{G}(G) = {\rm mex}(\{\mathcal{G}(G')\mid G\rightarrow G'\}).
    $$
\end{definition}
\begin{definition}
\label{def:disjunctive}
    For positions $G$ and $H$, their {\em disjunctive sum} $G+H$ is a position such that 
    $$
    \{X\mid G+H\rightarrow X\} =  \{G + H'\mid H\rightarrow H'\}\cup \{G'+H \mid G\rightarrow G'\}.
    $$  
\end{definition}
This is a recursive definition whose base is the disjunctive sum of two terminal positions, which is itself terminal.
\begin{theorem}
For every position $G$, when we play under normal play, the following holds.
\begin{eqnarray*}
\mathcal{G}(G) = 0 &\text{ if and only if}& G \text{ is a } \mathcal{P} \text{-position.} \\
\mathcal{G}(G) \neq 0 &\text{ if and only if}& G \text{ is an } \mathcal{N} \text{-position.}    
\end{eqnarray*}
\end{theorem}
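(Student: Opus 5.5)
We argue by strong induction on the game tree of $G$. Since every game is short, the relation ``$G'$ is an option of $G$'' is well-founded, so we may prove both equivalences for $G$ assuming they hold for every option $G'$ of $G$. In fact it suffices to prove the first equivalence, $\mathcal{G}(G)=0 \iff G$ is a $\mathcal{P}$-position. Under normal play every position is either a $\mathcal{P}$-position or an $\mathcal{N}$-position, and never both. This follows from the same induction by the standard determinacy argument for finite games: a position is $\mathcal{N}$ iff some option is $\mathcal{P}$, and $\mathcal{P}$ otherwise. So the second equivalence is simply the contrapositive of the first.

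The base case is a terminal position $G$, one with no options. Then $\mathcal{G}(G)={\rm mex}(\emptyset)=0$. Under normal play the player to move cannot move and loses, so the previous player wins and $G$ is a $\mathcal{P}$-position. The claim holds.

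For the inductive step, we use the recursive characterization of outcomes under normal play. $G$ is an $\mathcal{N}$-position iff it has an option $G'$ that is a $\mathcal{P}$-position: the mover goes there and becomes the ``previous player'' of a won position. $G$ is a $\mathcal{P}$-position iff every option is an $\mathcal{N}$-position. If $G$ is non-terminal and all options are $\mathcal{N}$, then whatever the mover does, the opponent is next player in a won position; if $G$ is terminal, the base case applies.

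We then compare this with the mex. By Definition~\ref{def:SGvalue}, $\mathcal{G}(G)=0$ iff $0\notin\{\mathcal{G}(G')\mid G\rightarrow G'\}$, that is, iff every option has $\mathcal{G}(G')\neq 0$. By the induction hypothesis this holds iff every option is an $\mathcal{N}$-position, which by the characterization above holds iff $G$ is a $\mathcal{P}$-position. Conversely, $\mathcal{G}(G)\neq 0$ iff some option has value $0$, iff some option is $\mathcal{P}$, iff $G$ is $\mathcal{N}$.

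There is no serious obstacle. The only point needing care is to justify the outcome recursion cleanly from the definitions of $\mathcal{P}$ and $\mathcal{N}$ via winning strategies. In particular, one must confirm that the two outcome classes partition all positions, which uses finiteness (shortness) of the game. I would state this as a brief preliminary observation within the same induction rather than as a separate lemma.
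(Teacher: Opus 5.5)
Your proof is correct. The paper gives no proof of this theorem: it states it as background and attributes it to Sprague and Grundy. Your argument, strong induction on the game tree matching ${\rm mex}=0$ with ``every option is an $\mathcal{N}$-position'' via the outcome recursion, is the standard proof from those sources. It also handles the one point needing care, that $\mathcal{P}$ and $\mathcal{N}$ partition the positions, inside the same induction.
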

\begin{theorem}
    For any positions $G$ and $H$,
    $$\mathcal{G}(G+H) = \mathcal{G}(G) \oplus \mathcal{G}(H)$$
    holds. Here, $\oplus$ is the bitwise XOR operator for binary notation (or {\sc Nim}-sum).
\end{theorem}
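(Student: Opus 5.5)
We argue by induction on the total number of moves remaining, i.e. on the sum of the birthdays (maximal game lengths) of $G$ and $H$. This is well founded because every game is short. Set $a=\mathcal{G}(G)$, $b=\mathcal{G}(H)$ and $c=a\oplus b$. By Definition~\ref{def:SGvalue}, to show $\mathcal{G}(G+H)=c$ we must check two things: (i) no option of $G+H$ has SG value $c$, and (ii) every value $d<c$ is the SG value of some option of $G+H$. By Definition~\ref{def:disjunctive}, every option of $G+H$ has the form $G'+H$ or $G+H'$. Each of these has strictly smaller total birthday, so the induction hypothesis gives $\mathcal{G}(G'+H)=\mathcal{G}(G')\oplus b$ and $\mathcal{G}(G+H')=a\oplus\mathcal{G}(H')$. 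The base case is $G$ and $H$ both terminal, where both sides equal $0$.

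For (i), suppose $\mathcal{G}(G')\oplus b=a\oplus b$. Cancelling $b$ (XOR is a group operation in which every element is its own inverse) gives $\mathcal{G}(G')=a$. This is impossible, since $a$ is the mex of the values of the options of $G$ and so is not among them. The case of an option $G+H'$ is symmetric.

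For (ii), which is the only step with real content, fix $d<c$ and let $e=d\oplus c\neq 0$. Let $k$ be the position of the highest set bit of $e$. Since $d<c$ and $e=d\oplus c$, the bits of $c$ and $d$ agree above position $k$, and $c$ has a $1$ at bit $k$ while $d$ has a $0$. Because $c=a\oplus b$, exactly one of $a$ and $b$ has a $1$ in bit $k$; without loss of generality it is $a$. Then $a\oplus e$ agrees with $a$ above bit $k$ and has a $0$ where $a$ has a $1$, so $a\oplus e<a$. By the definition of mex, every value smaller than $a$ is attained by some option, so there is an option $G'$ of $G$ with $\mathcal{G}(G')=a\oplus e$. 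Then
$$\mathcal{G}(G'+H)=a\oplus e\oplus b=c\oplus e=d.$$
The case where $b$ has the bit is handled in the same way using $H'$.

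Together, (i) and (ii) show that the mex of the option values of $G+H$ is exactly $c$, which completes the induction. The expected obstacle is only the bit argument in (ii): choosing $k$ as the leading bit of $d\oplus c$ and noticing that exactly one of $a$ and $b$ has a $1$ there. The rest is routine.
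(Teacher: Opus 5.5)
Your proof is correct and complete: the induction on total height, the cancellation argument for (i), and the leading-bit argument for (ii) (exactly one of $a,b$ carries the top bit of $d\oplus c$, so XOR-ing that summand with $d\oplus c$ lowers it below its mex) are exactly the standard proof of the nim-sum rule. The paper does not prove this statement itself and only cites Sprague and Grundy~\cite{Spr36, Gru39}, so there is nothing further to compare; your argument is the usual textbook one (as in~\cite{CGT}).
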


These results were proved independently by Sprague and Grundy \cite{Spr36, Gru39}.

To preserve the correspondence that positions with SG value $0$ are $\mathcal{P}$-positions and those with nonzero SG value are $\mathcal{N}$-positions, one sometimes considers a mis\`{e}re analogue of the SG values, in which terminal positions are assigned the value $1$.

\begin{definition}
    
    For every position $G$, its mis\`{e}re SG value $\mathcal{G}^-(G)$ is 
    $$
    \mathcal{G}^-(G) = \left\{\begin{array}{ll} 1 & \text{If } G \text{ is a terminal position} \\ {\rm mex}(\{\mathcal{G}^-(G')\mid G\rightarrow G'\}) & \text{Otherwise.} \end{array}\right.
    $$
\end{definition}

\begin{theorem}
    For every position $G$, when we play under mis\`{e}re convention, the following holds.
\begin{eqnarray*}
\mathcal{G}^-(G) = 0 &\text{ if and only if}& G \text{ is a } \mathcal{P} \text{-position.} \\
\mathcal{G}^-(G) \neq 0 &\text{ if and only if}& G \text{ is an } \mathcal{N} \text{-position.}    
\end{eqnarray*}

\end{theorem}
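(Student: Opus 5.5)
We argue by strong induction on the game tree of $G$, i.e.\ on the maximum number of moves remaining from $G$; this is well defined because every game is short. The induction hypothesis is that for every option $G'$ of $G$, $\mathcal{G}^-(G')=0$ exactly when $G'$ is a $\mathcal{P}$-position under the mis\`{e}re convention. It suffices to prove the first equivalence. Every position is either a $\mathcal{P}$-position or an $\mathcal{N}$-position, and not both, since the game is finite with no draws. So the second equivalence is the contrapositive of the first.

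\emph{Base case.} Let $G$ be terminal. Under mis\`{e}re play the player to move cannot move, so the previous player made the last move and loses. Hence the next player wins, and $G$ is an $\mathcal{N}$-position. By definition $\mathcal{G}^-(G)=1\neq 0$, so the equivalence holds.

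\emph{Inductive step.} Let $G$ be non-terminal, so $\mathcal{G}^-(G)={\rm mex}(\{\mathcal{G}^-(G')\mid G\rightarrow G'\})$.

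If $\mathcal{G}^-(G)=0$, then no option has value $0$. By the induction hypothesis every option is an $\mathcal{N}$-position. Whatever move the next player makes from $G$, the opponent then moves from an $\mathcal{N}$-position and wins. Hence $G$ is a $\mathcal{P}$-position.

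If $\mathcal{G}^-(G)\neq 0$, then $0$ lies in the set, so some option $G'$ has $\mathcal{G}^-(G')=0$. By induction $G'$ is a $\mathcal{P}$-position, so the next player moves to $G'$ and wins. Hence $G$ is an $\mathcal{N}$-position.

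The only delicate point is the base case. Note that a non-terminal position all of whose options are terminal has value ${\rm mex}\{1\}=0$. This correctly identifies it as a $\mathcal{P}$-position: the mover is forced to make the last move and loses. The recursion therefore needs no further special handling beyond the terminal value $1$, and the argument is otherwise identical to the normal-play proof.
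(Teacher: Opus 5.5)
Your proof is correct: it is the standard strong induction on the game tree, with the terminal value $1$ handled in the base case and the ${\rm mex}$ step running exactly as in normal play. The paper gives no proof of this statement; it quotes it as background beside the Sprague--Grundy theorem.

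The closest thing in the paper is an indirect route through Lemma~\ref{lem:conventions}. Put the special edges exactly on the moves into terminal positions. Then $\mathcal{G}_2(G,0)=\mathcal{G}^-(G)$ for non-terminal $G$. In that Rule~2 game, the player who enters a terminal position of $G$ loses under normal play, because the opponent removes the added token and makes the last move. So outcomes match mis\`{e}re play, and the normal-play theorem transfers. That route fits the paper's theme of recasting mis\`{e}re play as normal play. However, it needs a separate check for terminal $G$, where Rule~2 would give a $\mathcal{P}$-position. It is also no shorter, since the lemma is itself proved by an induction of the same shape as yours.

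One small wording fix: when $G$ is itself the starting position, there is no previous move. Phrase the base case as ``under the mis\`{e}re convention the player unable to move wins,'' which is exactly what $\mathcal{G}^-(G)=1$ encodes.
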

On the other hand, $\mathcal{G}^-(G+H) = \mathcal{G}^-(G) \oplus \mathcal{G}^-(H)$ does not always hold. 
This failure of the {\sc Nim}-sum formula is one reason why mis\`{e}re play is generally more difficult to analyze than normal play.

\subsection{Games with a pass}

Although the single shared pass convention is relatively recent compared with the normal-play and mis\`{e}re-play conventions, it has also been studied in papers like \cite{MIF16}, \cite{MFL11}.  In such games, at most one pass may be used in total during the game: once the pass has been used, neither player may pass thereafter. Moreover, once a terminal position has been reached, passing is no longer allowed.


We write $(G,1)$ when the shared pass is still available and $(G,0)$ after it has been used.
That is, $(G, 0)$ is the same position as $G$. From Definition \ref{def:SGvalue}, the SG value of position $(G, 1)$ is calculated as follows.
\begin{corollary}
For every position $G$, the SG value of $(G, 1)$ is
$$
\mathcal{G}((G, 1)) = \left\{ \begin{array}{ll} 0 & \text{If } G \text{ is a terminal position} \\ {\rm mex}(\{\mathcal{G}(G)\} \cup \{\mathcal{G}((G', 1)) \mid G\rightarrow G'\}) & \text{Otherwise.} \end{array}\right. 
$$
\end{corollary}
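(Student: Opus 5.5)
Since the statement is labelled a corollary of Definition~\ref{def:SGvalue}, I will prove it by identifying the option set of the position $(G,1)$ exactly and then applying the mex recursion of Definition~\ref{def:SGvalue} to that set. The game with a pass is treated as an ordinary short impartial game whose positions are the pairs $(G,p)$ with $p\in\{0,1\}$. I will first check that it is short: a pass can be used at most once, and every other move is a move of the underlying short game. Hence every play from $(G,1)$ ends after finitely many moves, and $\mathcal{G}$ is well defined on these pairs by the same recursion.

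Next I will read off the options directly from the rules of the single shared pass convention, splitting into two cases.

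\emph{Case 1: $G$ is terminal.} By the convention, once a terminal position has been reached, passing is no longer allowed. Also $G$ has no ordinary options. So $(G,1)$ has no options, and its SG value is ${\rm mex}(\emptyset)=0$.

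\emph{Case 2: $G$ is not terminal.} There are two kinds of moves.
\begin{itemize}
\item Ordinary moves $G\rightarrow G'$ leave the pass unused and lead to $(G',1)$.
\item The pass leaves the underlying position unchanged, uses up the pass, and leads to $(G,0)$.
\end{itemize}
Since $(G,0)$ is the same position as $G$, we have $\mathcal{G}((G,0))=\mathcal{G}(G)$. Applying Definition~\ref{def:SGvalue} to $(G,1)$ then gives
$$\mathcal{G}((G,1))={\rm mex}(\{\mathcal{G}(G)\}\cup\{\mathcal{G}((G',1))\mid G\rightarrow G'\}).$$

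I do not expect any step to be a real obstacle. The only point needing care is the identification $\mathcal{G}((G,0))=\mathcal{G}(G)$. It holds because, once the pass is used, the available moves from $(G,0)$ and all its followers are exactly those of $G$. Formally, a straightforward induction on the game tree shows that the map $(H,0)\mapsto H$ preserves options, and therefore preserves SG values.
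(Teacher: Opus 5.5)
Your proposal is correct and is essentially the paper's argument. The paper gives no separate proof: it reads off the options of $(G,1)$ (ordinary moves to $(G',1)$, plus the pass to $(G,0)=G$ when $G$ is non-terminal, and nothing when $G$ is terminal) and applies the mex recursion of Definition~\ref{def:SGvalue}, exactly as you do; your remarks on shortness and on $\mathcal{G}((G,0))=\mathcal{G}(G)$ only make explicit what the paper takes for granted.
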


{\sc Nim} is a game played with one or more piles of tokens, in which a player may remove any positive number of tokens from a single pile on each turn~\cite{Bou02}. Under normal play, the SG value of a {\sc Nim} pile is equal to the number of tokens in that pile.

A game with a pass move has almost the same option structure as the disjunctive sum of the original game and a {\sc Nim} pile of size one.
The only difference is that, when a move reaches a terminal position, the additional {\sc Nim} pile can be regarded as disappearing at the same time. This exceptional behavior distinguishes a game with a pass from an ordinary disjunctive sum and makes its analysis more difficult, for example, to characterize the $\mathcal{P}$-positions of {\sc Nim} with a  pass is an open problem \cite{Now25}.  

From a similar perspective, a mis\`{e}re game in which the terminal position is assigned SG value one may be viewed in the opposite way: when a terminal position is reached, a {\sc Nim} pile of size one can be regarded as suddenly appearing. Thus, this behavior is precisely the reverse of that observed in a game with a pass.


Motivated by these observations, in the next section we introduce a framework in which games with a pass and mis\`{e}re games are regarded as dual games represented by the same underlying game tree.
Further, in Section \ref{Sec:maintheorem}, we show our main results. 

\section{Generalizations of Sprague-Grundy values}
\label{Sec:SGvalues}
Let $G$ be a position given as a game tree.
Suppose that the edges of the game tree of $G$ are partitioned into two classes. Some edges are designated as {\em ordinary}, drawn as single lines in the figures below and the remaining edges as {\em special}, drawn as double lines.
We assume that the game tree satisfies the condition that every path from the root to a leaf contains at most one special edge.
For such a game tree, together with a position of a one-pile {\sc Nim} game containing at most one token, we consider the following three rules.

Rule 1: normal play.
A legal move consists either of moving one step along the game tree, following either an ordinary edge or a special edge, or of removing the token from the one-pile {\sc Nim} component if it is present.

Rule 2: the rule based on mis\`ere play.
The game starts with no token in the one-pile {\sc Nim} component. A legal move consists of moving one step along the game tree through an ordinary edge, moving one step along the game tree through a special edge, or removing the token from the one-pile {\sc Nim} component if it is present. When a player moves along a special edge, one token is added to the one-pile {\sc Nim} component.

Rule 3: the rule based on play with a pass.
The game starts with one token in the one-pile {\sc Nim} component. A legal move consists of moving one step along the game tree through an ordinary edge, moving one step along the game tree through a special edge, or removing the token from the one-pile {\sc Nim} component if it is present. When a player moves along a special edge, the token in the one-pile {\sc Nim} component is also removed, if it is present.

{\sc Chocolate-bar games} realizing constructions analogous to Rules 1 and 3 were studied in \cite{MMS25}. Similar examples realizing Rule 2 can be constructed in the same manner.

For a position $G$ given as a game tree with ordinary edges and special edges, let $\mathcal{G}_1(G,p), \mathcal{G}_2(G,p), $ and $\mathcal{G}_3(G,p)$ be the SG values under Rules 1, 2 and 3, respectively, where $p\in \{0, 1\}$ is the number of tokens in the one-pile {\sc Nim} component.

\begin{lemma}
\label{lem:conventions}
Let $G$ be a non-terminal position.
Assume that, in the game tree of $G$, every edge leading to a terminal position is special and that no other edge is special.
When Rules 2 and 3 are played on $G$ under normal play, their SG values coincide, respectively, with the SG values of the original game without the one-pile {\sc Nim} component under mis\`ere play and under play with a pass.
\end{lemma}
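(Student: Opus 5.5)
We prove both claims by structural induction on the game tree of $G$, comparing the recursive SG formulas on each side. Throughout, the edges of $G$ are labelled as in the hypothesis: an edge is special exactly when it leads to a terminal position.

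\emph{Rule 2.} Claim: for every non-terminal position $H$ in the tree of $G$, $\mathcal{G}_2(H,0)=\mathcal{G}^-(H)$.

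First, the terminal positions. Under Rule 2 a terminal position $T$ of the tree can only be reached through a special edge. That move adds a token, so the resulting compound position is $(T,1)$. Its only option is removing the token, which gives $(T,0)$, a position with no options. Hence $\mathcal{G}_2(T,1)={\rm mex}\{0\}=1=\mathcal{G}^-(T)$.

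Next, the induction step. Let $H$ be non-terminal and consider $(H,0)$. The token is absent, so the options are moves along edges of the tree. Each special edge leads to some $(T,1)$, which has value $1=\mathcal{G}^-(T)$. Each ordinary edge leads to some $(H',0)$ with $H'$ non-terminal, and by induction $\mathcal{G}_2(H',0)=\mathcal{G}^-(H')$. Taking the mex of these values gives exactly the recursion defining $\mathcal{G}^-(H)$. One structural point must be checked: since each edge is special iff it ends at a leaf, a non-terminal position is only ever reached with the token absent. So positions $(H,1)$ with $H$ non-terminal never occur, and the induction closes. Applying the claim at the root gives $\mathcal{G}_2(G,0)=\mathcal{G}^-(G)$.

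\emph{Rule 3.} Claim: for every position $H$ in the tree, $\mathcal{G}_3(H,p)=\mathcal{G}((H,p))$ for $p\in\{0,1\}$, where the right-hand side is the SG value of the game with a pass.

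The case $p=0$ is immediate. With no token present, Rule 3 is just normal play on the tree, so $\mathcal{G}_3(H,0)=\mathcal{G}(H)=\mathcal{G}((H,0))$.

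For $p=1$, first take $H$ terminal. Under Rule 3 a terminal position is entered only via a special edge, and that move removes the token. So the reachable position is $(T,0)$, with value $0$, matching the convention $\mathcal{G}((T,1))=0$.

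Now take $H$ non-terminal with $p=1$. The options of $(H,1)$ are:
\begin{itemize}
\item removing the token, giving $(H,0)$ with value $\mathcal{G}(H)$;
\item moving along an ordinary edge to a non-terminal $H'$, giving $(H',1)$, whose value by induction is $\mathcal{G}((H',1))$;
\item moving along a special edge to a terminal $T$, giving $(T,0)$, whose value $0$ equals $\mathcal{G}((T,1))$.
\end{itemize}
The mex of these values is exactly the expression in the Corollary for $\mathcal{G}((H,1))$. The game starts with the token present, so the result at the root is $\mathcal{G}_3(G,1)=\mathcal{G}((G,1))$.

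\emph{Main obstacle.} The delicate point is the bookkeeping of which compound positions can actually arise, and the treatment of terminal positions. Under Rule 2 one must check that $(H,1)$ with $H$ non-terminal never occurs. Under Rule 3 one must check that $(T,1)$ with $T$ terminal never occurs, so that the pass-game convention $\mathcal{G}((T,1))=0$ agrees with the value $\mathcal{G}_3(T,0)=0$ of the position actually reached. Both facts follow from the hypothesis that the special edges are exactly the edges into leaves. Together with $G$ being non-terminal, this ensures the root is handled by the inductive step rather than by a base case.
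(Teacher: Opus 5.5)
Your proof is correct and follows the paper's argument: induction on the height of the tree, matching the mex recursion at each non-terminal position, with terminal positions handled through the special edge ($(T,1)$ has value $1=\mathcal{G}^-(T)$ under Rule 2, and the position reached under Rule 3 is $(T,0)$, with value $0=\mathcal{G}((T,1))$). Your explicit check of which compound positions can actually arise is more careful than the paper, which simply writes $\mathcal{G}_3(G',p)=\mathcal{G}_3(G',0)$ for terminal $G'$ without comment.
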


\begin{proof}
    We prove that for any position $G$ given as a game tree with ordinary edges and special edges, which satisfies that every edge leading to a terminal position is special and that no other edge is special, $\mathcal{G}_2 (G, p) = \mathcal{G}^-(G)$ and $\mathcal{G}_3(G, p)= \mathcal{G}((G,p))$.

    First, we show that $\mathcal{G}_2 (G, p) = \mathcal{G}^-(G)$. Since every special edge is just before a terminal position, for a position $(G', p)$, if $p = 1$ then $G'$ is a terminal position and if it appears as an option of $(G,0)$, $\mathcal{G}_2(G', 1) = \mathcal{G}^-(G') = 1$. For any non-terminal position $G$, we use induction on the height of game tree of $G$.
    Assume that for every non-terminal option $G'$, $\mathcal{G}_2(G',0) = \mathcal{G}^-(G').$ We already have $\mathcal{G}_2(G',1) = \mathcal{G}^-(G')$ for every terminal option $G'$. Therefore, 
    \begin{eqnarray*}
    \mathcal{G}_2(G,0) &=& {\rm mex}(\{\mathcal{G}_2(G', 0)\mid G\rightarrow G' \text{ and } G' \text{ is not terminal.}\} \cup \{\mathcal{G}_2(G', 1) \mid G\rightarrow G' \text{ and } G' \text{ is terminal.}\}) \\
    &=&{\rm mex}(\{\mathcal{G}^-(G') \mid G \rightarrow G'\}) = \mathcal{G}^-(G). 
    \end{eqnarray*}

    Next, we show that $\mathcal{G}_3(G, p)= \mathcal{G}((G,p))$. For any terminal option $G'$, $\mathcal{G}_3(G',p) = \mathcal{G}_3(G', 0)= \mathcal{G}((G',p)) = 0$ since before every terminal position there is a special edge.

    For any non-terminal position $G$, we use induction on the height of game tree of $G$. Assume that for every non-terminal option $G', \mathcal{G}_3(G',p) = \mathcal{G}((G',p)).$ We already have $\mathcal{G}_3(G', p) =\mathcal{G}((G',p)) = 0$ for every terminal option $G'.$ In addition, $\mathcal{G}_3(G, 0) = \mathcal{G}((G, 0))$ holds.
    Therefore,

    \begin{eqnarray*}
        \mathcal{G}_3(G, 1) &=& {\rm mex}(\{\mathcal{G}_3(G',1)\mid G\rightarrow G' \text{ and } G' \text{ is not terminal.}\} \\ &&\cup\{\mathcal{G}_3(G',0)\mid G\rightarrow G' \text{ and } G' \text{ is terminal.}\} \cup \{\mathcal{G}_3(G,0)\})  \\
        &=&{\rm mex}(\{\mathcal{G}((G',1))\mid G\rightarrow G' \text{ and } G' \text{ is not terminal.}\} \\ &&\cup \{\mathcal{G}((G',0))\mid G\rightarrow G' \text{ and } G' \text{ is terminal.}\} \cup \{\mathcal{G}((G, 0))\}) \\
        &=& \mathcal{G}((G,1)).
    \end{eqnarray*}

\end{proof}

Figures \ref{fig:normalSG}, \ref{fig:misereSG}, and \ref{fig:passSG} show the SG values under Rules 1, 2, and 3, respectively, for a graph in which special edges occur if and only if immediately before terminal positions. 
Here we consider the state in which no move has yet been made in the one-pile {\sc Nim} component.
As Lemma \ref{lem:conventions} shows, 
Rule 2 agrees with the mis\`ere SG values, while Rule 3 agrees with the normal-play SG values for the corresponding game with a pass.

In general, it is known to be difficult to determine mis\`ere SG values or SG values for games with a pass from the normal-play SG values. However, if the positions of the special edges are changed slightly, as in Figs. \ref{fig:rule1SG}, \ref{fig:rule2SG}, and \ref{fig:rule3SG}, the SG values can be computed in a simple form.

Under Rule 2, the SG value of a vertex before a special edge is traversed, represented by a single-lined vertex, is equal to its SG value under Rule 1. The SG values of vertices after a special edge has been traversed, represented by double-lined vertices, are obtained by taking the {\sc Nim}-sum with $1$ to their SG value under Rule 1; equivalently, it is the {\sc Nim}-sum of the SG value of the original game tree and the SG value of the one-pile {\sc Nim} component. By contrast, under Rule 3, the SG value of a vertex before a special edge is traversed is obtained by taking the {\sc Nim}-sum with $1$ to its SG value under Rule 1; equivalently, it is the {\sc Nim}-sum of the SG value of the original game tree and the SG value of the one-pile {\sc Nim} component. On the other hand, the SG values after a special edge has been traversed are equal to the SG values under Rule 1. Since the token is removed from the one-pile {\sc Nim} component at the moment a special edge is traversed, the disjunctive sum structure is preserved.

\begin{figure}[htb]
    \centering
    \begin{minipage}{.25\linewidth}
        \begin{tikzpicture}[auto]
            \node[shape = circle, draw] (a) at (0, 1){2};
            \node[shape = circle, draw, double] (b) at (2, 1){0};
            \node[shape = circle, draw] (c) at (2, 0){1};
            \node[shape = circle, draw, double] (d) at (4, 0){0};
            \draw[double, ->] (a) -- (b);
            \draw[->] (a) -- (c);
            \draw[double, ->] (c) -- (d);
        \end{tikzpicture}
    \caption{SG values under normal play}
    \label{fig:normalSG}    
    \end{minipage}
    \begin{minipage}{.05\linewidth}
    \hspace{1truemm}
    \end{minipage}
    \begin{minipage}{.25\linewidth}
            \begin{tikzpicture}[auto]
            \node[shape = circle, draw] (a) at (0, 1){2};
            \node[shape = circle, draw, double] (b) at (2, 1){1};
            \node[shape = circle, draw] (c) at (2, 0){0};
            \node[shape = circle, draw, double] (d) at (4, 0){1};
            \draw[double, ->] (a) -- (b);
            \draw[->] (a) -- (c);
            \draw[double, ->] (c) -- (d);
        \end{tikzpicture}
    \caption{SG values under mis\`ere play}
    \label{fig:misereSG}    
    \end{minipage}
        \begin{minipage}{.05\linewidth}
        \hspace{1truemm}
    \end{minipage}
    \begin{minipage}{.25\linewidth}
                \begin{tikzpicture}[auto]
            \node[shape = circle, draw] (a) at (0, 1){1};
            \node[shape = circle, draw, double] (b) at (2, 1){0};
            \node[shape = circle, draw] (c) at (2, 0){2};
            \node[shape = circle, draw, double] (d) at (4, 0){0};
            \draw[double, ->] (a) -- (b);
            \draw[->] (a) -- (c);
            \draw[double, ->] (c) -- (d);
        \end{tikzpicture}
    \caption{SG values for play with a pass}
    \label{fig:passSG}    
    \end{minipage}
    
\end{figure}

\begin{figure}[htb]
    \centering
    \begin{minipage}{.25\linewidth}
        \begin{tikzpicture}[auto]
            \node[shape = circle, draw] (a) at (0, 1){2};
            \node[shape = circle, draw, double] (b) at (2, 1){0};
            \node[shape = circle, draw, double] (c) at (2, 0){1};
            \node[shape = circle, draw, double] (d) at (4, 0){0};
            \draw[double, ->] (a) -- (b);
            \draw[double, ->] (a) -- (c);
            \draw[->] (c) -- (d);
        \end{tikzpicture}
    \caption{SG values under Rule 1}
    \label{fig:rule1SG}    
    \end{minipage}
        \begin{minipage}{.05\linewidth}
    \hspace{1mm}
    \end{minipage}
    \begin{minipage}{.25\linewidth}
            \begin{tikzpicture}[auto]
            \node[shape = circle, draw] (a) at (0, 1){2};
            \node[shape = circle, draw, double] (b) at (2, 1){1};
            \node[shape = circle, draw, double] (c) at (2, 0){0};
            \node[shape = circle, draw, double] (d) at (4, 0){1};
            \draw[double, ->] (a) -- (b);
            \draw[double,->] (a) -- (c);
            \draw[->] (c) -- (d);
        \end{tikzpicture}
    \caption{SG values under Rule 2}
    \label{fig:rule2SG}    
    \end{minipage}
        \begin{minipage}{.05\linewidth}
    \hspace{1mm}
    \end{minipage}
    \begin{minipage}{.25\linewidth}
                \begin{tikzpicture}[auto]
            \node[shape = circle, draw] (a) at (0, 1){3};
            \node[shape = circle, draw, double] (b) at (2, 1){0};
            \node[shape = circle, draw, double] (c) at (2, 0){1};
            \node[shape = circle, draw, double] (d) at (4, 0){0};
            \draw[double, ->] (a) -- (b);
            \draw[double, ->] (a) -- (c);
            \draw[->] (c) -- (d);
        \end{tikzpicture}
    \caption{SG values under Rule 3}
    \label{fig:rule3SG}    
    \end{minipage}
    
\end{figure}

The theorem obtained in this paper shows that for any game tree, the good behavior of SG values under Rule 2 like Fig. \ref{fig:rule2SG} occurs if and only if the good behavior of SG values under Rule 3 like Fig. \ref{fig:rule3SG} occurs.

\section{Main theorem}
\label{Sec:maintheorem}
\begin{theorem}
    Let $G$ be a game position and let $p \in \{0, 1\}$ be the number of tokens in the one-pile {\sc Nim} component combined with $G$. Denote the SG values under Rules 1, 2, and 3 by $\mathcal{G}_1(G,p), \mathcal{G}_2(G,p), \mathcal{G}_3(G, p)$, respectively. Also, let $\mathcal{G}(G)$ denote the ordinary normal-play SG value of $G$.
    
    For a given ruleset,
    $$\mathcal{G}_1(G,p) = \mathcal{G}_2(G,p) =  \mathcal{G}(G) \oplus p $$
    holds for every position $G$ and every $p \in \{0, 1\}$ if and only if
        $$\mathcal{G}_1(G,p) = \mathcal{G}_3(G,p) =  \mathcal{G}(G) \oplus p $$
        holds for every position $G$ and every $p \in \{0, 1\}$.
\end{theorem}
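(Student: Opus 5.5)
The plan is to show that both conditions are equivalent to one local condition on the children of each position. First I dispose of Rule 1. Under Rule 1 the game is simply the disjunctive sum of the tree, with the ordinary/special marking ignored, and a {\sc Nim} pile of size $p$. By the Sprague--Grundy theorem, $\mathcal{G}_1(G,p)=\mathcal{G}(G)\oplus p$ always holds. So the statement reduces to the following: $\mathcal{G}_2(G,p)=\mathcal{G}(G)\oplus p$ for all $G,p$ if and only if $\mathcal{G}_3(G,p)=\mathcal{G}(G)\oplus p$ for all $G,p$. I interpret traversing a special edge under Rule 2 as setting the pile to one token. This is consistent, since a path contains at most one special edge.

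For a position $G$, let $O$ be the set of values $\mathcal{G}(G')$ over ordinary options, let $S$ be the same set over special options, and let $g=\mathcal{G}(G)={\rm mex}(O\cup S)$. I argue by induction on the height of $G$, assuming the relevant identity for all options. Under Rule 2 the options of $(G,p)$ have values $\{x\oplus p: x\in O\}\cup\{x\oplus 1: x\in S\}$, together with $\mathcal{G}_2(G,0)$ when $p=1$. For $p=1$ this set is $(O\cup S)\oplus 1\cup\{\mathcal{G}_2(G,0)\}$. Once $\mathcal{G}_2(G,0)=g$, its mex is automatically $g\oplus 1$, which is the Sprague--Grundy computation for a sum with a one-token pile. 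Hence the Rule 2 identity holds throughout if and only if, at every position,
\[ (\mathrm{C2})\qquad {\rm mex}(A)=g,\quad\text{where } A=O\cup(S\oplus 1). \]
Under Rule 3, the case $p=0$ is just the original tree, so it holds trivially. For $p=1$ the option values are $(O\oplus1)\cup S\cup\{\mathcal{G}_3(G,0)\}=(A\oplus1)\cup\{g\}$. Hence the Rule 3 identity holds throughout if and only if, at every position,
\[ (\mathrm{C3})\qquad {\rm mex}\bigl((A\oplus 1)\cup\{g\}\bigr)=g\oplus 1. \]
The inductions in both directions are routine once these reductions are stated, since the hypotheses on options are exactly what turns child values into elements of $O$ and $S$.

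The theorem therefore reduces to a purely combinatorial lemma about finite sets: for a set $A$ and $g={\rm mex}(O\cup S)$, (C2) holds if and only if (C3) holds. The extra information available is that $O\cup S\subseteq A\cup(A\oplus 1)$, so every $h<g$ lies in $A$ or in $A\oplus1$.

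For the forward direction, assume $\{0,\dots,g-1\}\subseteq A$ and $g\notin A$. I split on the parity of $g$.
\begin{itemize}
\item If $g$ is even, then $\{0,\dots,g-1\}\oplus1=\{0,\dots,g-1\}$. Adding $g$ gives all of $0,\dots,g$. Moreover $g+1\notin A\oplus1$ because $g\notin A$. So the mex is $g+1=g\oplus1$.
\item If $g$ is odd, then $A\oplus1\supseteq\{0,\dots,g-2\}\cup\{g\}$. Also $g-1\notin A\oplus1$ because $g\notin A$. So the mex is $g-1=g\oplus1$.
\end{itemize}

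The converse is where I expect the main obstacle. From (C3), every $v<g\oplus1$ with $v\neq g$ lies in $A\oplus1$, and $g\oplus1\notin A\oplus1$. Applying $\oplus1$ gives $g\notin A$, and $h\in A$ for every $h<g$ with one possible exception. The exception is $g$ odd and $h=g-1$, whose partner $g$ was supplied by the added singleton rather than by $A$. I must rule out $g-1\notin A$ in that case. Here I use $g-1\in O\cup S\subseteq A\cup(A\oplus1)$: if $g-1\notin A$, then $g-1\in A\oplus1$, i.e.\ $g\in A$. This contradicts the fact that $g\oplus1=g-1$ is absent from $(A\oplus1)\cup\{g\}$, which forces $g\notin A$. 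Checking this boundary case carefully, and also the degenerate case of terminal $G$ with $g=0$, is the step I expect to need the most attention. With it the lemma holds, and the two inductions combine to give the theorem.
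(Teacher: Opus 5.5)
Your proof is correct and follows essentially the paper's route. It uses induction on height and reduces both conditions to mex conditions on the values of ordinary and special options. In the converse, the same boundary case ($g$ odd, value $g-1$) is settled exactly as in the paper's third case, via $g-1\in O\cup S$ and the fact that the Rule~3 mex forbids $g-1\in S$. Your repackaging is tidier than the paper's element-by-element case analysis: it uses the single set $A=O\cup(S\oplus1)$, whose Rule~3 counterpart is exactly $(A\oplus1)\cup\{g\}$, together with a parity split. It also shows that the forward implication holds for any $A$ with ${\rm mex}(A)=g$; the constraint $O\cup S\subseteq A\cup(A\oplus1)$ is needed only for that one boundary case.
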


\begin{proof}
We prove both implications by induction on the height of the subtree rooted at $G$. If a special edge has already been traversed on the path from the root to $G$, no special edge occurs below $G$, and all three rules reduce to the ordinary disjunctive sum of $G$ and a {\sc Nim} pile of size $p$. Hence the assertion follows immediately. We may therefore assume that no special edge has yet been traversed.



    Let 
    $$S_1(G)=\{G'\mid G\rightarrow G' \text{ along an ordinary edge}\}$$
    and
    $$S_2(G)=\{G'\mid G\rightarrow G' \text{ along a special edge}\}.$$
    Since there is no confusion, for brevity, $S_1(G)$ will be abbreviated as $S_1$, and $S_2(G)$ as $S_2$.

    First, assume that $\mathcal{G}_1(G, p)= \mathcal{G}_2(G,p) = \mathcal{G}(G)\oplus p$ holds for every position in a given ruleset. 
    
    Then, 
    \begin{eqnarray*}
      \mathcal{G}_1(G, 0) &=& {\rm mex}(\{\mathcal{G}_1(G', 0) \mid G' \in S_1\} \cup \{\mathcal{G}_1(G',0)\mid G'\in S_2\} ) \\
      &=&{\rm mex}(\{\mathcal{G}(G')\mid G'\in S_1\} \cup \{\mathcal{G}(G')\mid G'\in S_2\}),
    \end{eqnarray*}
    and
    \begin{eqnarray*} 
    \mathcal{G}_2(G,0) &=& {\rm mex}(\{\mathcal{G}_2(G',0)\mid G'\in S_1\}\cup \{\mathcal{G}_2(G',1)\mid G'\in S_2\}) \\
    &=& {\rm mex}(\{\mathcal{G}(G')\mid G'\in S_1\}\cup \{\mathcal{G}(G') \oplus 1\mid G'\in S_2\}).
    \end{eqnarray*}

    Since $\mathcal{G}_1(G, 0) = \mathcal{G}_2(G, 0) = \mathcal{G}(G), $ for any nonnegative integer $k < \mathcal{G}(G),$ 
    $$
    k \in \{\mathcal{G}(G') \mid G'\in S_1\} \text{ or } k \in \{\mathcal{G}(G')\mid G' \in S_2\} \cap \{\mathcal{G}(G')\oplus 1\mid G'\in S_2\}
    $$
    holds.

    Consider $\mathcal{G}_3(G, p)$. If $p = 0,$ then $\mathcal{G}_3(G, 0) = \mathcal{G}(G)$ since $p$ never increases under Rule 3. Let $p = 1.$

    From the induction hypothesis,
    \begin{eqnarray*}
        \mathcal{G}_3 (G, 1) &=&  {\rm mex}(\{ \mathcal{G}_3(G', 1) \mid G' \in S_1\} \cup \{\mathcal{G}_3(G', 0)\mid G' \in S_2\} \cup \{\mathcal{G}_3(G, 0)\} )\\
        &=& {\rm mex}(\{\mathcal{G}(G')\oplus1\mid G'\in S_1\} \cup \{\mathcal{G}(G')\mid G'\in S_2\}\cup\{\mathcal{G}(G)\}).
    \end{eqnarray*}

    We show that for every nonnegative integer $k < \mathcal{G}(G) \oplus 1, k \in \{\mathcal{G}(G')\oplus1\mid G'\in S_1\} \cup \{\mathcal{G}(G')\mid G'\in S_2\}\cup\{\mathcal{G}(G)\}.$ 

    Since for $k < \mathcal{G}(G),$
    $$k \in \{\mathcal{G}(G') \mid G'\in S_1\} \text{ or } k \in \{\mathcal{G}(G')\mid G' \in S_2\} \cap \{\mathcal{G}(G')\oplus 1\mid G'\in S_2\}$$
    holds and $\mathcal{G}(G) \oplus 1 \le \mathcal{G}(G) +1 ,$ there are three cases:
    \begin{itemize}

        \item If $k < \mathcal{G}(G)$ and $k \in \{\mathcal{G}(G') \mid G'\in S_2\},$ then it is clear that $ k \in \{\mathcal{G}(G')\oplus1\mid G'\in S_1\} \cup \{\mathcal{G}(G')\mid G'\in S_2\}\cup\{\mathcal{G}(G)\}.$
        \item If $k < \mathcal{G}(G), k \in \{\mathcal{G}(G')\mid G'\in S_1\}$ and $k \not\in \{\mathcal{G}(G')\mid G'\in S_2\}$, then, assume that $k \not \in \{\mathcal{G}(G') \oplus 1\mid G' \in S_1\}.$ We have $k \oplus 1 \not \in \{\mathcal{G}(G')\mid G'\in S_1\}$. Thus, $k \oplus 1 \in \{\mathcal{G}(G')\mid G'\in S_2\} \cap \{\mathcal{G}(G') \oplus 1\mid G' \in S_2\}$, which contradicts $k \not \in \{\mathcal{G}(G')\mid G'\in S_2\}$.
        \item If $k = \mathcal{G}(G), $ then it is clear that $k \in \{\mathcal{G}(G')\oplus1\mid G'\in S_1\} \cup \{\mathcal{G}(G')\mid G'\in S_2\}\cup\{\mathcal{G}(G)\}.$
    \end{itemize}
    Therefore, $k \in \{\mathcal{G}(G')\oplus1\mid G'\in S_1\} \cup \{\mathcal{G}(G')\mid G'\in S_2\}\cup\{\mathcal{G}(G)\}.$
    In addition, for $k = \mathcal{G}(G)\oplus 1,$ we have $k \not\in \{\mathcal{G}(G')\oplus1\mid G'\in S_1\} \cup \{\mathcal{G}(G')\mid G'\in S_2\}\cup\{\mathcal{G}(G)\}$ as follows:
    \begin{itemize}
        \item If $k \in \{\mathcal{G}(G')\oplus  1 \mid G'\in S_1\},$ then $\mathcal{G}(G) \in \{\mathcal{G}(G')\mid G'\in S_1\},$ which contradicts $\mathcal{G}(G) \neq \mathcal{G}(G').$
        \item If $k \in \{\mathcal{G}(G')\mid G'\in S_2\},$ then $\mathcal{G}(G) \in \{\mathcal{G}(G') \oplus 1\mid G'\in S_2\},$ which contradicts $\mathcal{G}_2(G, 0) = \mathcal{G}(G) =  {\rm mex}(\{\mathcal{G}(G')\mid G'\in S_1\}\cup \{\mathcal{G}(G') \oplus 1\mid G'\in S_2\}).$
        \item If $k = \mathcal{G}(G),$ then $k \neq \mathcal{G}(G) \oplus 1.$
    \end{itemize}
    Therefore, $$\mathcal{G}_3(G,1) = {\rm mex}(\{\mathcal{G}(G')\oplus1\mid G'\in S_1\} \cup \{\mathcal{G}(G')\mid G'\in S_2\}\cup\{\mathcal{G}(G)\}) = \mathcal{G}(G) \oplus 1.$$

    Next, assume that $\mathcal{G}_1(G,p) = \mathcal{G}_3(G,p) = \mathcal{G}(G)\oplus p$ holds for every position in a given ruleset. Then,     
    \begin{eqnarray*}
        \mathcal{G}_1(G, 1) &=& {\rm mex}(\{\mathcal{G}_1(G',1)\mid G'\in S_1\}\cup \{\mathcal{G}_1(G',1)\mid G'\in S_2\}\cup \{ \mathcal{G}_1(G,0)\} )\\ &=&{\rm mex}(\{\mathcal{G}(G')\oplus1\mid G'\in S_1\}\cup \{\mathcal{G}(G')\oplus 1\mid G'\in S_2\}\cup \{ \mathcal{G}(G)\} )
    \end{eqnarray*}
    and
    \begin{eqnarray*}
        \mathcal{G}_3(G,1)  &=& {\rm mex}(\{\mathcal{G}_3(G', 1)\mid G'\in S_1\} \cup \{\mathcal{G}_3(G',0)\mid G'\in S_2 \} \cup \{\mathcal{G}_3(G,0)\}) \\
        &=& {\rm mex}(\{\mathcal{G}(G')\oplus1\mid G'\in S_1\}\cup \{\mathcal{G}(G')\mid G'\in S_2\} \cup\{\mathcal{G}(G)\}).
    \end{eqnarray*}
    Since $\mathcal{G}_1(G,1) = \mathcal{G}_3(G,1) = \mathcal{G}(G) \oplus 1,$ for any nonnegative integer $k < \mathcal{G}(G)\oplus 1,$
    $$
    k \in \{\mathcal{G}(G')\oplus1 \mid G' \in S_1\} \text{ or } k\in \{\mathcal{G}(G')\mid G'\in S_2\} \cap \{\mathcal{G}(G')\oplus 1 \mid G'\in S_2\} \text{ or } k = \mathcal{G}(G)
    $$
    holds.

    Consider $\mathcal{G}_2(G,p).$ Since we assume that a special edge has not been traversed, we assume that $p = 0$.
    From the induction hypothesis,
    \begin{eqnarray*}
        \mathcal{G}_2(G,0) &=& {\rm mex}(\{\mathcal{G}_2(G',0)\mid G'\in S_1\} \cup \{\mathcal{G}_2(G',1)\mid G'\in S_2\}) \\
        &=& {\rm mex}(\{\mathcal{G}(G') \mid G'\in S_1\} \cup \{\mathcal{G}(G')\oplus 1\mid G' \in S_2\}).
    \end{eqnarray*}
    We show that for every nonnegative integer $k < \mathcal{G}(G), k \in \{\mathcal{G}(G') \mid G'\in S_1\} \cup \{\mathcal{G}(G')\oplus 1\mid G' \in S_2\}.$

    Since $\mathcal{G}(G)\oplus 1 \ge \mathcal{G}(G)-1,$ there are three cases:
    \begin{itemize}
        \item If $k < \mathcal{G}(G)\oplus 1$ and $k \in \{\mathcal{G}(G')\oplus 1\mid G' \in S_2\},$ then it is clear that $k \in \{\mathcal{G}(G') \mid G'\in S_1\} \cup \{\mathcal{G}(G')\oplus 1\mid G' \in S_2\}.$
        \item If $k < \mathcal{G}(G)\oplus 1, k \in \{\mathcal{G}(G')\oplus 1 \mid G'\in S_1\}$ and $k \not \in \{\mathcal{G}(G') \oplus 1 \mid G' \in S_2\},$ then, assume that $k \not \in \{\mathcal{G}(G')\mid G'\in S_1\}.$ We have $k\oplus 1 \not \in \{\mathcal{G}(G')\oplus 1\mid G'\in S_1\}.$ Thus, $k\oplus 1 \in \{\mathcal{G}(G')\mid G'\in S_2\}\cap \{\mathcal{G}(G')\oplus 1 \mid G'\in S_2\}, $ which contradicts $k \not \in \{\mathcal{G}(G') \oplus 1 \mid G' \in S_2\}.$
        \item If $k = \mathcal{G}(G) \oplus 1 < \mathcal{G}(G),$ then, $G$ has an option $H$ such that $\mathcal{G}(H) = \mathcal{G}(G)\oplus 1.$ If $H \in S_1,$ it is clear that $k\in \{\mathcal{G}(G')\mid G'\in S_1\}.$ Assume that $H\in S_2$. Since $\mathcal{G}_3(G,1) = \mathcal{G}(G)\oplus 1 = {\rm mex}(\{\mathcal{G}(G')\oplus 1 \mid G'\in S_1\} \cup \{\mathcal{G}(G')\mid G'\in S_2\} \cup \{\mathcal{G}(G)\}),$ we have $ \mathcal{G}(G)\oplus 1 \not\in \{\mathcal{G}(G')\mid G'\in S_2\},$ which is a contradiction.
    \end{itemize}
    Therefore, $k \in \{\mathcal{G}(G') \mid G'\in S_1\} \cup \{\mathcal{G}(G')\oplus 1\mid G' \in S_2\}.$ In addition, for $k = \mathcal{G}(G),$ we have $k \not\in \{\mathcal{G}(G') \mid G'\in S_1\} \cup \{\mathcal{G}(G')\oplus 1\mid G' \in S_2\}$ as follows:
    \begin{itemize}
        \item If $k\in \{\mathcal{G}(G') \mid G'\in S_1\},$ it contradicts $\mathcal{G}(G) \neq \mathcal{G}(G')$.
        \item If $k \in \{\mathcal{G}(G')\oplus 1\mid G' \in S_2\}, $ then $\mathcal{G}(G) \oplus 1\in \{\mathcal{G}(G')\mid G' \in S_2\},$ which contradicts $\mathcal{G}_3(G, 1) = {\rm mex}(\{\mathcal{G}(G')\oplus1\mid G'\in S_1\}\cup \{\mathcal{G}(G')\mid G'\in S_2\} \cup\{\mathcal{G}(G)\})$.
    \end{itemize}
    Therefore, 
    $$
\mathcal{G}_2(G, 0)  ={\rm mex} (\{\mathcal{G}(G') \mid G'\in S_1\} \cup \{\mathcal{G}(G')\oplus 1\mid G' \in S_2\}) = \mathcal{G}(G). 
    $$
\end{proof}


Thus, within this framework, the obstruction to the ordinary SG formula on the mis\`{e}re side is equivalent to the corresponding obstruction on the shared-pass side.





\begin{thebibliography}{9}
\bibitem{Bou02} C. L. Bouton, Nim, a game with a complete mathematical theory, Annals of Mathematics, {\bf 3}, pp. 35--39 (1902).
 \bibitem{Gru39} P. M. Grundy, Mathematics and games, Eureka, {\bf 32}, pp.6--8 (1939). 
 \bibitem{MIF16} R. Miyadera, M. Inoue, and M. Fukui, Impartial chocolate bar games with a pass, Integers {\bf 16}, \#G5 (2016).
\bibitem{MMS25} R. Miyadera, H. Manabe, and A. Singh, Generalizations of two-dimensional and three dimensional chocolate bar games, Integers {\bf 25} \#G3 (2025).
\bibitem{MFL11} R. E. Morrison, E. J. Friedman, and A. S. Landsberg, Combinatorial games with a pass: a dynamical systems approach, Chaos {\bf 21}(4), \#043108, (2011).
\bibitem{Now25} R. J. Nowakowski, Unsolved problems in combinatoiral games, Games of No Chance {\bf VI}, pp.55--98 (2025).
\bibitem{CGT}
A. N. Siegel, Combinatorial Game Theory, American Mathematical Society (2013).
\bibitem{Spr36} R. P. Sprague, \"{U}ber mathematische Kampfspiele, T\^{o}hoku Mathematical Journal, {\bf 41}, pp.438--444 (1935--1936).

\end{thebibliography}
\end{document}